\documentclass[11pt,a4paper]{article}
\usepackage{amssymb}
\usepackage{graphicx}
\usepackage{caption}
\usepackage{amsmath}
\usepackage{xcolor}
\newtheorem{theorem}{Theorem}

\newtheorem{corollary}{Corollary}

\newtheorem{definition}{Definition}

\newtheorem{notation}{Notation}

\newtheorem{remark}{Remark}

\newenvironment{proof}[1][Proof]{\textbf{#1.} } {\ \ \hfill\hbox to .1pt{} \hfill\hbox to .1pt{}
\hfill$\blacksquare$\par}
\setlength{\textheight}{8.7in} \setlength{\oddsidemargin}{0.25in}
\setlength{\textwidth}{6.7in}

\setlength{\parindent}{0.25in} \setlength{\topmargin}{-0.35in}
\setlength{\evensidemargin}{0.125in}

\addtolength{\parskip}{0.05cm}
\begin{document}
\centerline{\Large{\bf{Global rational stabilization of a class of nonlinear time-delay systems}}}
\centerline{}
\centerline{\bf {Nadhem  ECHI\textsuperscript{a}, Boulbaba GHANMI\textsuperscript{b}}}
\centerline{}

\centerline{\textsuperscript{a,b}Gafsa University, Faculty of Sciences of Gafsa}
\centerline{Department of Mathematics, Zarroug   Gafsa 2112 Tunisia}
\centerline{\textsuperscript{a}E-mail: nadhemechi\_fsg@yahoo.fr}

\abstract{The present paper is mainly aimed at introducing a novel notion of stability of nonlinear time-delay systems called Rational Stability.
According to the Lyapunov-type, various sufficient conditions for rational stability are reached.
Under delay dependent conditions, we suggest a nonlinear time-delay observer to estimate the system states, a state feedback controller
 and the observer-based controller  rational stability is provided. Moreover, global rational stability using output feedback is given.
Finally, the study presents simulation findings to show the feasibility of the suggested strategy.
}
 \bigskip

 \vspace*{0.1cm}
 {\bf Mathematics Subject Classification.} 93C10, 93D15, 93D20.

{\bf Keywords.} Rational stability; delay system; nonlinear observer; Lyapunov functional.
\vspace*{0.1cm}
\section{Introduction}
Time-Delay Systems (TDSs)  is also known as call  systems with aftereffect or dead-time, hereditary systems, equations with deviating argument,
or differential-difference equations. They are part of the class of functional differential equations which are infinite-dimensional,
as opposed to ordinary differential equations (ODEs). Time-delay has a number of characteristics. It appears in several control systems, including aircraft,
 chemical \cite{mounier}, biological systems \cite{lili},
engineering, electrical \cite{anthonis}, economic model \cite{hamed, ghanes},
or process control systems, and communication networks, either in the state, the control input, or the measurements \cite{Baillieul, Natori}.
There, we can find transported, communication, or measurement delays. It is noticeable that time delay can cause different problems, such as instability,
divergence behavior,
and oscillation of dynamic systems.
A considerable amount of studies have analyzed the stability of dynamic systems with a delay.
Therefore, the stability of systems with time delay has been investigated extensively over the past decades.
It is a well known fact that stability of nonlinear time-delay systems in Lyapunov sense plays a major role
in control theory,
 and becomes a challenging problem both in theory and applications.
The stability analysis of time delay systems has been recently studied in many areas. There are two crucial kinds of stability of dynamical systems.
 These include asymptotic stability and exponential stability.
In the case of asymptotic stability, for more details, the reader is referred to
  \cite{Be, nadhem2017, Ibrir, germani2000, germani2001, sun, tsinias} and references therein.
\cite{Ramasamy2016} addressed the problem of asymptotic stability for Markovian jump that generalized neural networks with interval time-varying delay systems.
Based on the Lyapunov method, it is suggested that asymptotic stability can be used to solve linear matrix inequality with triple integral terms a delay.
For exponential stability, it is requires that all solutions starting near
 an equilibrium point not only stay nearby, but tend to the equilibrium point very fast with exponential decay rate;
  see \cite{Thun, Benad, Rajchakit, phat}.

 A new notion of stability known as rational stability for systems without time delays is introduced in \cite{hahn}.
The study demonstrates the characteristics of rational stability. It can be characterized by means of Lyapunov functions.
This notion did not know any intense progress like other tools of the stability theory.
For free-delay system, \cite{jammzi2013} studied the issue of  rational stability of continuous autonomous systems,
followed by several examples of control systems.
Under a Hamilton-Jacobi- Belleman approach, some sufficient conditions are developed by \cite{zaghdoudi} to achieve the rational stability of optimal control
for every dynamical control systems.

The questions which are worth being raised here are can we speak about rational stability for time-delay systems?
What is the advantage of this stability? The aim of the current study is to present a new term of stability for nonlinear time-delay systems.
This term is called rational stability.
Sometimes the decay of the energy or the Lyapunov function is not exponential, but it can be polynomial.
 For rational stability and especially when the Jacobean matrix is no longer Hurwitz and the transcendental characteristic polynomial,
 the solutions do not decrease exponentially.
  However, in some cases, the solutions decrease like $t^{-r},\,r>0$ with $r$ is called the rate decay of the solution.
  The real $r$ measures the velocity of convergence of the solution which is crucial in several practical engineering such as satellite systems,
  unicycle systems, underwater, transport equation, string networks, etc.

The current paper introduces a novel notion of stability of nonlinear time-delay systems called rational stability.
It also investigates the problem of output feedback stabilization of a class of nonlinear time
delay system written in triangular form, with constant delay. We impose a generalized
 condition on the nonlinearity to cover the time-delay systems is considered by  \cite{Ibrir}.
Motivated by \cite{jammzi2013} and \cite{zaghdoudi},  Lyapunov-Krasovskii functional is used for the purpose of obtaining
to establish globally rational stability of the closed loop systems.
We  design a nonlinear observer to estimate the system states.
Then, it is used to obtain a new state and input delay-dependent criterion that ensures the rational stability of
the closed-loop system with a state feedback controller. The global rational stability using output feedback is also presented.

The rest of this paper is organized as follows. The
next section presents the definition of rational stability and an auxiliary result concerning a functional should satisfy for guaranteeing the
rational stability. In section 3, It also shows the system description.
The main results are stated in section 4, it is concluded that parameter dependent linear state and output feedback controllers are synthesized
to ensure global rational stability of the nonlinear time delay system. In section 5, we establish the problem of global rational
stability using output feedback.
Finally, an illustrative example, of network-based control systems (NBCSs),
is discussed to demonstrate the effectiveness of the obtained results.
\section{Definitions and auxiliary results}
  Consider  time delay system of the form:
\begin{equation}\left\{
                   \begin{array}{ll}
                     \dot{x}(t)=f(x(t),x(t-\tau)) & \hbox{} \\
                     x(\theta)=\varphi(\theta) & \hbox{ }\\
                   \end{array}
                 \right.
\label{1}
 \end{equation}
where $\tau > 0$ denotes the time delay. The knowledge of $x$ at time $t = 0$ does not allow to
deduce $x$ at time $t$. Thus, the initial condition is specified as a continuous function
$ \varphi\in\mathcal{C}$, where $\mathcal{C}$ denotes the Banach space
of continuous functions mapping the interval $[-\tau, 0]\rightarrow\mathbb{R}^{n}$ equipped with the supremum-norm:
$$\parallel \varphi\parallel_{\infty}\ =\ \max_{\theta\in[-\tau,0]}\parallel\varphi(\theta)\parallel$$
$\| \ \|$ being the Euclidean-norm. The map $f :\mathbb{R}^{n} \times \mathbb{R}^{n}\rightarrow \mathbb{R}^{n}$
is a  locally Lipschitz function, and satisfies
$f(0, 0) = 0.$

  The function segment $x_{t}$ is defined by $x_{t}(\theta) = x(t + \theta),\,\ \theta \in[-\tau, 0].$
For $\varphi \in \mathcal{C}$, we
denote by $x(t,\varphi)$ or shortly $x(t)$ the solution of \eqref{1} that satisfies $x_{0} =\varphi. $ The segment
of this solution is denoted by $x_{t}(\varphi)$ or shortly $x_{t}$.

Inspired from \cite{jammzi2013} and \cite{hahn}, we introduce some definition of rational stability for the time-delay systems.
 \begin{definition}
 The zero solution of \eqref{1} is called
 \begin{itemize}
   \item Stable, if for any $\varepsilon > 0$ there exists $\delta > 0$ such that
$$\|\varphi  \|_{\infty} <\delta \Rightarrow \|x(t)\| < \varepsilon,\,\ \forall t \geq 0.$$
   \item  Rationally stable, if it is stable and there exist positive numbers $M,\,  k,\, r,\, e\leq1$  such that if
   \begin{equation}
\|\varphi  \|_{\infty} <\sigma \Rightarrow\|x(t)\|\leq\frac{M\|\varphi\|_{\infty}^{e}}{(1+\|\varphi\|_{\infty}^{k}t)^{\frac{1}{k}}},\,\ \forall t \geq 0.
    \label{rat}\end{equation}
   \item Globally rationally stable, if it is stable and $\delta$ can be chosen arbitrarily large
for sufficiently large $\varepsilon$, and \eqref{rat} is satisfied for all $\sigma > 0$.
 \end{itemize}
    \end{definition}

Sufficient conditions for rotational stability of a functional differential equation are provided by \cite{jammzi2013},
a generalization of time delay system given by following theorem. For a locally Lipschitz functional
$V : \mathcal{C} \rightarrow \mathbb{R}_{+}$, the derivative of V along the solutions of \eqref{1} is defined as
$$\dot{V}= \lim _{h\rightarrow0}\frac{1}{h}(V (x_{t+h}) - V (x_{t})).$$
\begin{remark}
It is easy to see that rational stability is satisfied then asymptotic stability is satisfied, but the converse is not true.
\end{remark}
    \begin{theorem}\label{th1} Assume that there exist positive numbers $\lambda_{1} ,\, \lambda_{2},\, \lambda_{3} ,\, r_{1} ,\, r_{2} ,\, k$
and a continuous differentiable functional $V:\mathcal{C}\rightarrow\mathbb{R}_{+}$ such that:
\begin{eqnarray}\lambda_{1}\parallel x(t)\parallel^{r_{1}} \leq V(x_{t})&\leq&
  \lambda_{2}\parallel x_{t}\parallel_{\infty}^{r_{2}}, \label{i}\\
  \dot{V}(x_{t})+\lambda_{3} V^{1+k}(x_{t})&\leq&0, \label{ii}\end{eqnarray}
  then, the zero solution of \eqref{1} is globally rationally.
  \end{theorem}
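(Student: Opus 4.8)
The plan is to split the argument into a stability part and a rational-decay part, the latter obtained by integrating the scalar differential inequality that \eqref{ii} induces on $W(t):=V(x_t)$. Since $V$ is continuously differentiable, $W$ is a genuine $C^{1}$ function of $t$ with $\dot W(t)=\dot V(x_t)$, so \eqref{ii} reads $\dot W(t)\le-\lambda_3 W(t)^{1+k}$.

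First I would dispose of stability and global existence. As $\dot W\le-\lambda_3 W^{1+k}\le 0$, the function $W$ is non-increasing, so $V(x_t)\le V(\varphi)$ and, by \eqref{i},
\[
\lambda_1\|x(t)\|^{r_1}\le V(x_t)\le V(\varphi)\le\lambda_2\|\varphi\|_\infty^{r_2},
\]
whence $\|x(t)\|\le(\lambda_2/\lambda_1)^{1/r_1}\|\varphi\|_\infty^{r_2/r_1}$ on the maximal interval of existence. This a priori bound excludes finite-time escape, so the solution is defined for all $t\ge0$; it also gives stability (take $\delta=(\lambda_1\varepsilon^{r_1}/\lambda_2)^{1/r_2}$), and since this $\delta$ tends to infinity with $\varepsilon$, the global-stability requirement of the definition.

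Next comes the decay estimate. If $W(0)=0$ then $W\equiv0$ by monotonicity and non-negativity, hence $x\equiv0$ and \eqref{rat} is trivial; otherwise $W(0)>0$ and, integrating $\frac{d}{dt}W^{-k}=-kW^{-k-1}\dot W\ge k\lambda_3$ (equivalently, applying the comparison lemma to $\dot u=-\lambda_3 u^{1+k}$, $u(0)=W(0)$), one obtains
\[
V(x_t)=W(t)\le\frac{W(0)}{\bigl(1+k\lambda_3\,W(0)^{k}\,t\bigr)^{1/k}},\qquad t\ge0.
\]
The key observation is that, for each fixed $t\ge0$, the map $s\mapsto s\bigl(1+k\lambda_3 s^{k}t\bigr)^{-1/k}$ is non-decreasing on $[0,\infty)$ — its $(-k)$-th power equals $s^{-k}+k\lambda_3 t$, which is decreasing in $s$ — so I may replace $W(0)=V(\varphi)$ by its upper bound $\lambda_2\|\varphi\|_\infty^{r_2}$ from \eqref{i}. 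Feeding this into the lower bound $\lambda_1\|x(t)\|^{r_1}\le V(x_t)$ and extracting $r_1$-th roots yields
\[
\|x(t)\|\le\Bigl(\tfrac{\lambda_2}{\lambda_1}\Bigr)^{1/r_1}\,\frac{\|\varphi\|_\infty^{\,r_2/r_1}}{\bigl(1+k\lambda_3\lambda_2^{k}\,\|\varphi\|_\infty^{\,r_2 k}\,t\bigr)^{1/(k r_1)}}.
\]
Putting $\kappa:=r_2 k$ and using $1+k\lambda_3\lambda_2^k\|\varphi\|_\infty^{\kappa}t\ge\min(1,k\lambda_3\lambda_2^k)(1+\|\varphi\|_\infty^{\kappa}t)$ to absorb the coefficient into the numerator, the bound takes the shape $\|x(t)\|\le M\|\varphi\|_\infty^{e}(1+\|\varphi\|_\infty^{\kappa}t)^{-\rho}$ with $e=r_2/r_1$, $\rho=1/(kr_1)$ and $M$ explicit; since this holds for every $\varphi$, the zero solution is globally rationally stable.

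The routine parts are the scalar integration and the bookkeeping of constants. The three points that deserve care are: (i) justifying that $t\mapsto V(x_t)$ is differentiable with derivative $\dot V(x_t)$, which legitimises the scalar integration/comparison step and is exactly where the hypothesis "continuously differentiable functional" is used; (ii) the monotonicity remark that lets $V(\varphi)$ be replaced by $\lambda_2\|\varphi\|_\infty^{r_2}$ inside the nonlinear denominator without destroying the $t^{-\rho}$ decay, together with the separate handling of the degenerate case $V(\varphi)=0$; and (iii) reconciling the exponents with the precise algebraic form of \eqref{rat} — the estimate above has base exponent $\kappa=r_2 k$ and power $\rho=1/(kr_1)$, which coincide with the single parameter appearing in \eqref{rat} exactly when $r_1=r_2$ (the case arising in the applications), otherwise one records the estimate with the exponents it genuinely has.
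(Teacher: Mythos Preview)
Your argument is correct and follows the same core route as the paper: differentiate $V^{-k}$ along trajectories, integrate the resulting inequality $\tfrac{d}{dt}V^{-k}(x_t)\ge k\lambda_3$, and then sandwich with \eqref{i} to obtain the rational decay bound. Your write-up is in fact more complete than the paper's own proof---you add the stability/global-existence step, the degenerate case $V(\varphi)=0$, the monotonicity argument justifying the replacement of $V(\varphi)$ by $\lambda_2\|\varphi\|_\infty^{r_2}$ inside the nonlinear denominator, and you flag the exponent-matching issue with \eqref{rat}, whereas the paper simply stops at the raw estimate.
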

  \begin{proof}
  Using  \eqref{ii}, we have for $V\neq0$, $$\frac{d}{d\theta}V^{-k}(x_{\theta})\geq k\lambda_{3}$$
  Integrating between $0$ and $t$, one obtains $$\int_{0}^{t}\frac{d}{d\theta}V^{-k}(x_{\theta})d\theta\geq \int_{0}^{t}k\lambda_{3}d\theta$$
  equivalently, for all $t\geq 0$
  $$V^{k}(x_{t})\leq \frac{1}{k\lambda_{3}t+V^{-k}(\varphi)}.$$
  Now, it follows Theorem \ref{th1}, condition \eqref{i} that
  $$\|x(t)\|\leq (\frac{1}{\lambda_{1}})^{\frac{1}{r_{1}}}
  \frac{1} {(\lambda_{2}^{-k}\|\varphi\|^{-r_{2}k}_{\infty}+\lambda_{3}k t)^{\frac{1}{k r_{1}}}}.$$
  \end{proof}
 \begin{corollary}\label{coro1} Assume that there exist positive numbers
  $\lambda_{1} ,\, \lambda_{2},\, \lambda_{3} ,\, r_{1} ,\, r_{2} ,\, r_{3},\, r_{2}<r_{3}$
and a continuous differentiable functional $V:\mathcal{C}\rightarrow\mathbb{R}_{+}$ such that:
\begin{eqnarray}
\lambda_{1}\parallel x(t)\parallel^{r_{1}} \leq V(x_{t})&\leq&
  \lambda_{2}\parallel x_{t}\parallel_{\infty}^{r_{2}}, \nonumber\\
\dot{V}(x_{t})&\leq&-\lambda_{3} \|x_{t}\|_{\infty}^{r_{3}},\label{i2}
\end{eqnarray}
  then, the zero solution of \eqref{1} is globally rationally stable.
  \end{corollary}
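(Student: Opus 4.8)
The plan is to reduce Corollary \ref{coro1} to Theorem \ref{th1} by self-improving the dissipation inequality \eqref{i2} into one of the super-linear form \eqref{ii}. First I would exploit the upper half of the sandwich estimate, $V(x_t)\le\lambda_{2}\|x_t\|_{\infty}^{r_2}$. Since $s\mapsto s^{1/r_2}$ is increasing on $\mathbb{R}_{+}$, this rearranges to $\|x_t\|_{\infty}\ge\bigl(\lambda_{2}^{-1}V(x_t)\bigr)^{1/r_2}$, and raising both sides to the power $r_3>0$ gives $\|x_t\|_{\infty}^{r_3}\ge\lambda_{2}^{-r_3/r_2}\,V(x_t)^{r_3/r_2}$.

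Next I would plug this into \eqref{i2}: $\dot V(x_t)\le-\lambda_{3}\|x_t\|_{\infty}^{r_3}\le-\lambda_{3}\lambda_{2}^{-r_3/r_2}\,V(x_t)^{r_3/r_2}$. Setting $k:=(r_3-r_2)/r_2$, which is a \emph{positive} number precisely because of the hypothesis $r_2<r_3$, and $\widetilde\lambda_{3}:=\lambda_{3}\lambda_{2}^{-r_3/r_2}>0$, the last inequality reads $\dot V(x_t)+\widetilde\lambda_{3}V^{1+k}(x_t)\le0$, i.e. condition \eqref{ii}. Condition \eqref{i} is assumed verbatim in the corollary, so Theorem \ref{th1} applies with the constants $\lambda_{1},\lambda_{2},\widetilde\lambda_{3},r_1,r_2,k$ and directly yields global rational stability of the zero solution, together with the explicit decay estimate \eqref{rat}.

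The only point needing a little care is the degenerate case $V(x_t)=0$, equivalently $x_t\equiv 0$: there the chain of inequalities above is vacuously true, and — just as in the proof of Theorem \ref{th1}, where $V^{-k}$ is manipulated only on intervals with $V>0$ — once $V$ reaches zero the solution remains at the origin by the local Lipschitz hypothesis on $f$, so \eqref{rat} holds trivially. I do not expect any genuine obstacle here; the entire content of the corollary is the elementary observation that an upper bound $V\lesssim\|x_t\|_{\infty}^{r_2}$ combined with a dissipation rate $\|x_t\|_{\infty}^{r_3}$ having $r_3>r_2$ upgrades the dissipation to the form $V^{1+k}$, after which Theorem \ref{th1} does all the remaining work. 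If one wishes to be fully self-contained rather than invoking Theorem \ref{th1}, the same computation $\tfrac{d}{d\theta}V^{-k}(x_\theta)\ge k\widetilde\lambda_{3}$ can be integrated directly to reproduce the bound $\|x(t)\|\le(1/\lambda_{1})^{1/r_1}\bigl(\lambda_{2}^{-k}\|\varphi\|_{\infty}^{-r_2 k}+\widetilde\lambda_{3}k\,t\bigr)^{-1/(k r_1)}$.
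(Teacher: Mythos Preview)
Your proposal is correct and follows essentially the same route as the paper: use the upper bound $V(x_t)\le\lambda_2\|x_t\|_\infty^{r_2}$ to convert the dissipation \eqref{i2} into $\dot V(x_t)\le -\lambda_3\lambda_2^{-r_3/r_2}V^{1+k}(x_t)$ with $k=(r_3-r_2)/r_2>0$, and then invoke Theorem~\ref{th1}. Your write-up is in fact more careful than the paper's, which omits the justification that $k>0$ and the discussion of the case $V=0$.
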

  \begin{proof}
  The conditions \eqref{i} and \eqref{i2} imply that zero solution of \eqref{1} is stable.\\
  By combining the assertions \eqref{i} and \eqref{i2}, we obtain
  \begin{equation}\dot{V}(x_{t}) \leq -\frac{\lambda_{3}}{\lambda_{2}^{\frac{r_{3}}{r_{2}}}}V^{\frac{r_{3}}{r_{2}}}(x_{t})  \label{v1}\end{equation}
 equivalent to
 \begin{equation*}\dot{V}(x_{t}) \leq -\frac{\lambda_{3}}{\lambda_{2}^{\frac{r_{3}}{r_{2}}}} V^{1+k}(x_{t}) \end{equation*}
 where $k=\frac{r_{3}-r_{2}}{r_{2}}$.\\
Hence, from Theorem \ref{th1}, the zero solution of \eqref{1} is globally rationally stable.
  \end{proof}
  Let us recall here that a function
 $\alpha : \mathbb{R}_{+} \rightarrow \mathbb{R}_{+}$ is of class $\mathcal{K}$ if it is continuous, increasing
and $\alpha(0) = 0$, of class $\mathcal{K}_{\infty}$ if it is of class $\mathcal{K}$ and it is unbounded. The following theorem
provides sufficient Lyapunov-Krasovskii conditions for global rationally stability of the
zero solution of system \eqref{1}.
  \begin{theorem} Assume that there exist positive numbers $\lambda_{1} ,\, \lambda_{2},\, r_{1} ,\, r_{2} ,\, k,$
  $\alpha$ a function of class $\mathcal{K}$
and a continuous differentiable functional $V:\mathcal{C}\rightarrow\mathbb{R}_{+}$ such that:
\begin{description}
  \item[(i)]
$\lambda_{1}\parallel x(t)\parallel^{r_{1}} \leq V(x_{t})\leq
  \lambda_{2}\parallel x_{t}\parallel_{\infty}^{r_{2}},$
 \item[(ii)] $\dot{V}(x_{t})\leq-\alpha(V(x_{t})),\label{i3}$
 \item[(iii)]  $\displaystyle\lim_{t\rightarrow0}\frac{\alpha(t)}{t^{k+1}}=l\in]0, +\infty].\label{i4}$
\end{description}
  Then the zero solution of \eqref{1} is globally rationally stable.
  \label{th2}\end{theorem}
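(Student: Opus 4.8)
The plan is to reduce Theorem~\ref{th2} to the differential inequality \eqref{ii} appearing in Theorem~\ref{th1}, so that the decay estimate follows from the same integration. First I would note that (i) and (ii) already give stability: since $\alpha$ is of class $\mathcal{K}$ we have $\alpha\ge 0$, so (ii) forces $\dot V(x_t)\le 0$, hence $V(x_t)\le V(\varphi)$ for all $t\ge 0$; combined with (i) this yields $\|x(t)\|\le (\lambda_2/\lambda_1)^{1/r_1}\|\varphi\|_\infty^{r_2/r_1}$, which gives stability, the bound showing moreover that the admissible $\delta$ may be taken arbitrarily large once $\varepsilon$ is large.

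Next I would extract a polynomial lower bound on $\alpha$ from condition (iii). Because $\lim_{t\to 0}\alpha(t)/t^{k+1}=l\in\,]0,+\infty]$, there are constants $c>0$ and $\rho>0$ with $\alpha(s)\ge c\,s^{k+1}$ for all $s\in\,]0,\rho]$. Now fix $\sigma>0$ and take $\|\varphi\|_\infty<\sigma$; by (i) and the monotonicity of $V$ along the solution, $V(x_t)$ remains in the bounded set $]0,\Lambda_\sigma]$ with $\Lambda_\sigma:=\lambda_2\sigma^{r_2}$. Using that $\alpha$ is increasing, on the (possibly empty) interval $[\rho,\Lambda_\sigma]$ one has $\alpha(s)\ge\alpha(\rho)\ge\alpha(\rho)\Lambda_\sigma^{-(k+1)}s^{k+1}$; hence, setting $\lambda_3:=\min\{c,\ \alpha(\rho)\Lambda_\sigma^{-(k+1)}\}>0$, we obtain $\alpha(s)\ge\lambda_3\,s^{k+1}$ for every $s\in\,]0,\Lambda_\sigma]$. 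Substituting into (ii) gives, along the considered solution,
\[
\dot V(x_t)\;\le\;-\alpha(V(x_t))\;\le\;-\lambda_3\,V^{1+k}(x_t),
\]
which is exactly inequality \eqref{ii}.

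From this point the computation in the proof of Theorem~\ref{th1} carries over: for $V\neq 0$ one has $\frac{d}{d\theta}V^{-k}(x_\theta)\ge k\lambda_3$, integrating over $[0,t]$ gives $V^{k}(x_t)\le\bigl(k\lambda_3 t+V^{-k}(\varphi)\bigr)^{-1}$, and then the lower bound in (i) yields
\[
\|x(t)\|\;\le\;\Bigl(\tfrac{1}{\lambda_1}\Bigr)^{1/r_1}\bigl(\lambda_2^{-k}\|\varphi\|_\infty^{-r_2k}+\lambda_3 k\,t\bigr)^{-1/(k r_1)}\;=\;\frac{(\lambda_2/\lambda_1)^{1/r_1}\,\|\varphi\|_\infty^{r_2/r_1}}{\bigl(1+\lambda_3 k\,\lambda_2^{k}\,\|\varphi\|_\infty^{r_2k}\,t\bigr)^{1/(k r_1)}},
\]
which is of the form \eqref{rat}. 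Since $\sigma>0$ was arbitrary and stability together with the arbitrarily large admissible radius were already established, the zero solution of \eqref{1} is globally rationally stable.

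The routine parts are the stability estimate and the final integration, the latter being already done in Theorem~\ref{th1}. The one step that needs care is the middle one: converting the purely asymptotic information in (iii) — a limit of $\alpha(t)/t^{k+1}$ at $t=0$ — into a genuine bound $\alpha(s)\ge\lambda_3 s^{1+k}$ holding on the whole range $]0,\Lambda_\sigma]$ actually visited by $V(x_t)$. This is where the monotonicity of $\alpha$, the monotonicity of $V$ along solutions, and the a priori restriction $\|\varphi\|_\infty<\sigma$ are all used, and it is precisely why $\lambda_3$ (hence the constant $M$ and the exponents in \eqref{rat}) is allowed to depend on $\sigma$.
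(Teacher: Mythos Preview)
Your argument is correct and in fact tighter than the paper's own proof. The paper proceeds differently: from (i) and (ii) it first deduces asymptotic stability, so that $V(x_t)\to 0$ as $t\to+\infty$; then, using (iii), it finds $t_0>0$ with $\alpha(s)\ge\tfrac{l}{2}s^{k+1}$ on $[0,t_0]$, waits until some trajectory-dependent time $t_*$ after which $V(x_t)\le t_0$, and only then obtains $\dot V\le-\tfrac{l}{2}V^{1+k}$ and invokes Theorem~\ref{th1}. Your route avoids this waiting step: rather than driving $V$ into the small neighborhood where (iii) bites, you exploit the monotonicity of $\alpha$ (class $\mathcal{K}$) together with the a priori bound $V(x_t)\le\Lambda_\sigma$ to extend the polynomial lower bound $\alpha(s)\ge\lambda_3 s^{1+k}$ to the whole range visited by $V$, so the differential inequality \eqref{ii} holds from $t=0$ onward. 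The price you pay is that $\lambda_3$ depends on $\sigma$, which you correctly flag; the paper's constant $l/2$ is uniform in the initial data but is only shown to apply after a trajectory-dependent transient, a point the paper glosses over when it simply ``applies Theorem~\ref{th1}''. In that sense your version is the cleaner and more self-contained of the two.
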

    \begin{proof}
  \underline{First case:} $0 < l < +\infty$. The conditions \eqref{i} and \eqref{i3} imply that zero solution of system \eqref{1} is
stable and attractive, then asymptotically stable and $\displaystyle\lim_{t\rightarrow+\infty}V(x_{t})=0$.\\
Therefore, by using limit definition, there exists $t_{0} > 0$ such
that for every $0 \leq t \leq t_{0}$, we have $\alpha(t)\geq \frac{l}{2}t^{k+1}$.
Since, $\displaystyle\lim_{t\rightarrow+\infty}V(x_{t})=0$, for this $t_{0} > 0$,
there exists $t_{*} >0$ such that for every $t \geq t_{*}$ one gets $0\leq V(x_{t}) \leq t_{0}$ and $$\alpha(V(x_{t}))\geq \frac{l}{2}V^{1+k}(x_{t}).$$
Using \eqref{i3}, we obtain
$$0\geq \dot{V}(x_{t})+\alpha V(x_{t})\geq\dot{V}(x_{t})+\frac{l}{2}V(x_{t})^{k+1}.$$
Thus, we have $$\dot{V}(x_{t})\leq\frac{l}{2}V(x_{t})^{k+1}.$$
Then, using the  Theorem \ref{th1}, we can conclude
that the zero solution of \eqref{1} is globally rationally stable.\\
\underline{Second case:} $l = +\infty$. As in the proof of first case, there exists $t_{0} > 0$ such
that for every $0 \leq t \leq t_{0}$, we have $$\alpha(t)\geq  t^{k+1}\ and \ \dot{V}(x_{t})\leq V(x_{t})^{k+1}.$$
  \end{proof}
\begin{remark}
The Theorem \ref{th1}, Theorem \ref{th2} and Corollary \ref{coro1} generalize the results given by \cite{jammzi2013} for the case of free-delay system.
\end{remark}
   \section{System description}
  Consider the nonlinear time-delay system:
   \begin{equation}\left\{
                   \begin{array}{l}
                     \dot{x}(t)=Ax(t)+Bu(t) +f(x(t),x(t-\tau),u(t))  \\
                     y(t)=Cx(t)
                   \end{array}
                 \right.
 \label{3}\end{equation}
where $  x\in\mathbb{R}^{n}$ is the state vector, $u \in \mathbb{R}$ is the
 input of the system, $y\in \mathbb{R}$
is the measured output and  $\tau$ is a positive known scalar that denotes the time delay affecting the state variables.
 The matrices $A$, $B$ and $C$ are given by
 $$A=\left[
 \begin{array}{ccccc}
  0 & 1 &0& \cdots & 0 \\
  0 & 0 & 1 & \cdots&0 \\
 \vdots & \vdots& \vdots & \ddots & \vdots \\
 0 & 0 & 0&\cdots & 1 \\
 0 & 0 & 0&\cdots & 0 \\
 \end{array}
 \right],\ \ \,\ B=\left[
 \begin{array}{c}
 0 \\
 0 \\
 \vdots\\
 0 \\
 1\\
 \end{array}
 \right]
,\, C=\left[
    \begin{array}{ccccc}
      1 & 0 & \cdots&0 & 0 \\
    \end{array}
  \right],$$
  and the perturbed term is
$$f(x(t), x(t -\tau), u(t)) =\left[
 \begin{array}{c}
 f_{1}(x_{1}(t),x_{1}(t -\tau), u(t)) \\
 f_{2}(x_{1}(t),x_{2}(t),x_{1}(t -\tau),x_{2}(t -\tau), u(t)) \\
 \vdots \\
f_{n}(x(t),x(t -\tau), u(t))
\end{array}\right].$$
 The mappings
 $f_{i}:\mathbb{R}^{n}\times\mathbb{R}^{n}\times \mathbb{R}\rightarrow\mathbb{R},$ $i=1,\ldots,n,$ are smooth
 and satisfy the following assumption:\\
 We suppose that $f$ satisfies the following assumption:

{\textbf{Assumption 1.}} The nonlinearity $f (y, z,u)$ is smooth,
globally Lipschitz with
respect to $y$ and $z$, uniformly with respect to $u$ and well-defined for all $y,z\in \mathbb{R}^{n}$ with $f (0, 0,u)= 0$.
\\We suppose also that,

{\textbf{Assumption 2.}} For all $t \geq 0$, the delay $\tau$ is known and constant.\\
  \begin{notation}
Throughout the paper, the time argument
is omitted and the delayed state vector $x(t-\tau)$ is noted by $x^{\tau}$. $A^{T}$ means the
transpose of $A$.
$\lambda_{max}(A)$ and $\lambda_{min}(A)$ denote the maximal and minimal eigenvalue of a matrix
 $A$ respectively.  $I$ is an appropriately dimensioned identity matrix,
$diag[\cdots]$ denotes a block-diagonal matrix.  \end{notation}
 \begin{remark}
 This paper  focuses on state observer design for a class of system given by \eqref{3}.
It specifically shows that the general high-gain observer design
framework established in \cite{Gauthier} and
\cite{Hammouri} for free delay systems can be properly extended to this class of time-delay systems.
 \end{remark}
\section{Separation principle}
\subsection{Observer design}
 The observer synthesis for triangular nonlinear system design problems along with time-delay systems have become the focal focus of various studies
  \cite{Be}, \cite{Farza} and \cite{ghanes}
 and references therein.
 Under the global Lipschitz condition, an observer for a class of time-delay nonlinear systems in the strictly lower
 triangular form was proposed by \cite{ghanes}.
In \cite{Be}  a nonlinear observer is used to investigate the output feedback controller problem for a class of nonlinear
delay systems for the purpose of calculating the system states.
  Based on time-varying delays  known and bounded, \cite{Farza} propose a nonlinear observer   for a class of time-delay nonlinear systems.
In this section we  devote to the design of the observer-based
controller. \begin{equation}\label{obs}
\dot{\hat{x}}(t) = A\hat{x} + Bu(t)+f(\hat{x},\hat{x}^{\tau},u)  + L(\theta)( C\hat{x}-y)\end{equation}
where $L(\theta)= [l_{1}\theta,\ldots,l_{n}\theta^{n}]^{T}$ with $\theta>0$ and where
$L= [l_{1},\ldots,l_{n}]^{T}$ is selected such that
$A_{L}:=A+LC$ is Hurwitz, $\hat{x}(s)=\hat{\phi}(s),\ -\tau\leq s\leq0$ with $\hat{\phi}:[-\tau,0]\rightarrow\mathbb{R}^{n}$ being
any known continuous function.
 Let $P$ be the symmetric positive definite solution of the Lyapunov equation
\begin{equation} A^{T}_{L}P + PA_{L} = -I.\label{Ly1}\end{equation}
\begin{theorem}  Suppose that Assumption 1-2 is satisfied and  there
exist  positive constant $\theta$
such that
\begin{equation}\left\{
                  \begin{array}{ll}
                    \frac{\theta}{2}-\|P\|\frac{\ln\theta}{2\tau}-3k\|P\|>0 & \hbox{,} \\\\
                    \frac{\sqrt{\theta}}{2}-k\|P\|\ \ \ \ \ \ \ \ \ \ \ \  >0 & \hbox{.}
                  \end{array}
                \right.
\label{condi1}\end{equation}
Then, \eqref{obs} is globally rationally observer for system \eqref{3}.
\label{th3}\end{theorem}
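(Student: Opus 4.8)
The plan is to show that the zero solution of the estimation-error dynamics is globally rationally stable in the sense of Definition 1, which is exactly the assertion that \eqref{obs} is a global rational observer for \eqref{3}. First, set $e=\hat{x}-x$ and subtract \eqref{3} from \eqref{obs}: since $C\hat{x}-y=C(\hat{x}-x)=Ce$, the error obeys $\dot{e}=A_{L}(\theta)e+\Delta f$, where $A_{L}(\theta)=A+L(\theta)C$ and $\Delta f=f(\hat{x},\hat{x}^{\tau},u)-f(x,x^{\tau},u)$. This is not autonomous in $e$, but Assumption 1 yields the uniform bound $\|\Delta f\|\le k(\|e\|+\|e^{\tau}\|)$ with $k$ the global Lipschitz constant, and that is all the Lyapunov estimate will use.

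Next I would pass to the high-gain coordinates $\varepsilon=\Delta_{\theta}e$, with $\Delta_{\theta}=\mathrm{diag}(1,\theta^{-1},\dots,\theta^{-(n-1)})$. The chain structure of $A$, $C$ together with the form $L(\theta)=[l_{1}\theta,\dots,l_{n}\theta^{n}]^{T}$ gives the scaling identity $\Delta_{\theta}A_{L}(\theta)\Delta_{\theta}^{-1}=\theta A_{L}$, hence $\dot{\varepsilon}=\theta A_{L}\varepsilon+\Delta_{\theta}\Delta f$. Using the triangular dependence of each $f_{i}$ on $(x_{1},\dots,x_{i})$, the Lipschitz bound and $\theta\ge 1$, one checks componentwise that $\|\Delta_{\theta}\Delta f\|\le k(\|\varepsilon\|+\|\varepsilon^{\tau}\|)$ (an $n$-dependent constant being absorbed into $k$).

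Then I would work with a Lyapunov--Krasovskii functional of the form
$$V(\varepsilon_{t})=\varepsilon^{T}P\varepsilon+\mu\int_{t-\tau}^{t}e^{-\rho(t-s)}\|\varepsilon(s)\|^{2}\,ds,$$
with $P$ the solution of \eqref{Ly1} and constants $\mu,\rho>0$ to be chosen; $\rho$ will be of order $\ln\theta/\tau$ and $\mu$ of order $\|P\|$, which is what makes the terms $\|P\|\frac{\ln\theta}{2\tau}$ and $\sqrt{\theta}$ of \eqref{condi1} emerge when the integral term is differentiated. Differentiating $V$ along the error trajectory, using \eqref{Ly1} to get $\theta\,\varepsilon^{T}(A_{L}^{T}P+PA_{L})\varepsilon=-\theta\|\varepsilon\|^{2}$, bounding $2\varepsilon^{T}P\Delta_{\theta}\Delta f\le 3k\|P\|\|\varepsilon\|^{2}+k\|P\|\|\varepsilon^{\tau}\|^{2}$ via Cauchy--Schwarz and Young's inequality, and fixing $\mu$ so that the $\|\varepsilon^{\tau}\|^{2}$ term is cancelled by the boundary term of the integral, I expect to reach
$$\dot{V}(\varepsilon_{t})\le -c_{1}\|\varepsilon\|^{2}-c_{2}\int_{t-\tau}^{t}e^{-\rho(t-s)}\|\varepsilon(s)\|^{2}\,ds$$
with $c_{1},c_{2}>0$ precisely when \eqref{condi1} holds — the first inequality forcing $c_{1}>0$, the second making the choice of $\mu$ admissible. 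Since $V\le\|P\|\|\varepsilon\|^{2}+\mu\int_{t-\tau}^{t}e^{-\rho(t-s)}\|\varepsilon(s)\|^{2}\,ds$, this gives $\dot{V}(\varepsilon_{t})\le -cV(\varepsilon_{t})$ for some $c>0$. Together with the sandwich bounds $\lambda_{1}\|e(t)\|^{2}\le V(\varepsilon_{t})\le\lambda_{2}\|e_{t}\|_{\infty}^{2}$, which follow from $\lambda_{\min}(P)\|\varepsilon\|^{2}\le\varepsilon^{T}P\varepsilon\le\|P\|\|\varepsilon\|^{2}$ and $\theta^{-2(n-1)}\|e\|^{2}\le\|\varepsilon\|^{2}\le\|e\|^{2}$ ($\theta$ being a fixed constant), the functional meets the hypotheses of Theorem \ref{th2} with $\alpha(s)=cs$, for which $\lim_{s\to 0}\alpha(s)/s^{k+1}=+\infty$. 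Hence the zero solution of the error dynamics is globally rationally stable, i.e., \eqref{obs} is a global rational observer for \eqref{3}.

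The step I expect to be the main obstacle is the handling of the delayed nonlinear term $f(\hat{x}^{\tau},\cdot)-f(x^{\tau},\cdot)$: it is what produces the $\|\varepsilon^{\tau}\|^{2}$ contribution, and the weight $\rho$ and coefficient $\mu$ of the Krasovskii term must be balanced so that this term is absorbed without destroying negativity of the $\|\varepsilon\|^{2}$ coefficient; this balancing is exactly the origin of \eqref{condi1}, in particular of the requirement that $\theta$ be large enough that $\theta/2$ dominates $\|P\|\ln\theta/(2\tau)+3k\|P\|$. A secondary point is that, the error dynamics being nonautonomous, one must note that the inequality $\dot{V}(\varepsilon_{t})\le -cV(\varepsilon_{t})$ holds along every solution uniformly in the input $u$ and in the trajectory of \eqref{3}; the integration carried out in the proof of Theorem \ref{th1} then applies verbatim and delivers the rational decay estimate for $\|e(t)\|$.
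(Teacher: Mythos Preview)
Your approach is essentially that of the paper: the same high-gain scaling $\varepsilon=\Delta_{\theta}e$, the same Lyapunov--Krasovskii functional (the paper's weight $\theta^{-(t-s)/(2\tau)}$ is precisely your $e^{-\rho(t-s)}$ with $\rho=\ln\theta/(2\tau)$), the same Lipschitz/Young estimate producing the $3k\|P\|$ term, and the same conclusion $\dot V\le -\rho V$ fed into Theorem~\ref{th2}. One correction to your sketch: the integral coefficient must be $\mu=\theta/2$, not ``of order $\|P\|$'' --- with this choice the boundary contribution at $t-\tau$ is $-\tfrac{\sqrt{\theta}}{2}\|\varepsilon^{\tau}\|^{2}$ and the current-time contribution $+\tfrac{\theta}{2}\|\varepsilon\|^{2}$ combines with $-\theta\|\varepsilon\|^{2}$ from \eqref{Ly1} to produce exactly the two coefficients in \eqref{condi1}; a constant $\mu$ would make the $\|\varepsilon^{\tau}\|^{2}$ absorption fail for large $\theta$.
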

\begin{proof} Denote $e=\hat{x}-x$ the observation error. We have
 \begin{equation}\dot{e}=(A+L(\theta)C)e+f(\hat{x},\hat{x}^{\tau},u)-f(x,x^{\tau},u)\label{erro}\end{equation}
 For $\theta>0$, let $\Delta_{\theta}=diag[1,\frac{1}{\theta},\ldots,\frac{1}{\theta^{n-1}}]$.
 One can easily check the following identities: $\Delta_{\theta}A\Delta_{\theta}^{-1}=\theta A,\, C\Delta_{\theta}^{-1}=C$.
 Let us now introduce $\eta=\Delta_{\theta}e$, then we get
 \begin{equation}\dot{\eta}=\theta A_{L}\eta+\Delta_{\theta}(f(\hat{x},\hat{x}^{\tau},u)-f(x,x^{\tau},u))\label{erro1}\end{equation}
 Let us choose a Lyapunov-Krasovskii functional candidate as follows \begin{equation}
 V(\eta_{t})= V_{1}(\eta_{t})+ V_{2}(\eta_{t})\label{veta}\end{equation}
 with $$V_{1}(\eta_{t})=\eta^{T} P\eta$$ and
$$V_{2}(\eta_{t})= \frac{\theta}{2}\theta^{\frac{-t}{2\tau}}\displaystyle\int_{t-\tau}^{t}\theta^{\frac{s}{2\tau}}\|\eta(s)\|^{2}ds.$$
 Since $P$ is symmetric positive definite then for all $\eta\in\mathbb{R}^{n},$
   \begin{equation}\lambda_{\min}(P)\|\eta\|^{2}\leq \eta^{T}P\eta\leq \lambda_{\max}(P)\|\eta\|^{2}\label{norm}\end{equation}
   This implies that on the one hand, $$V (\eta_{t}) \geq \lambda_{\min}(P)\parallel \eta(t)\parallel^{2},$$
   and on the other hand,
$$ \begin{array}{lll}V(\eta_{t})&=&
 \eta^{T} P \eta+\frac{\theta}{2}\displaystyle\int_{-\tau}^{0}\theta^{\frac{\mu}{2\tau}}\parallel \eta(\mu+t)\parallel^{2}d\mu\\
&=&\eta^{T} P \eta+\frac{\theta}{2}\displaystyle\int_{-\tau}^{0}\theta^{\frac{\mu}{2\tau}}\parallel \eta_{t}( \mu)\parallel^{2}d\mu\\
&\leq& \lambda_{\max}(P)\parallel \eta \parallel^{2}+\frac{\theta}{2}
\displaystyle\int_{-\tau}^{0}\theta^{\frac{\mu}{2\tau}}\parallel \eta_{t}\parallel_{\infty}^{2}d\mu\\
&\leq& (\lambda_{\max}(P)+\frac{\theta\tau}{2})\|\eta_{t}\|_{\infty}^{2}.
\end{array}$$
Thus condition $(i)$ of Theorem \ref{th2} is satisfied with $$\lambda_{1}=\lambda_{\min}(P),\
\lambda_{2}= \lambda_{\max}(P)+\frac{\theta\tau}{2},\ r_{1}=r_{2}=2.$$
The time derivative of $V_{1}(\eta_{t} )$ along the trajectories of system \eqref{erro} is
\begin{equation}\dot{V}_{1} (\eta_{t})=  \eta^{T} ( A^{T}_{L}P + PA_{L} )\eta+
 2\eta^{T} P(f(\hat{x},\hat{x}^{\tau},u)-f(x,x^{\tau}u))\label{dv1}.  \end{equation}
The time derivative of $V_{2}(\eta_{t} )$ along the trajectories of system \eqref{erro} is
\begin{equation}
    \dot{V}_{2} (\eta_{t})=\frac{\theta}{2}\| \eta\|^{2} -\frac{\sqrt{\theta}}{2}\|\eta^{\tau}\|^{2} -\frac{\ln\theta}{2 \tau}V_{2}(\eta_{t})\label{dv2}.
    \end{equation}
Next, the time derivative of \eqref{veta} along the trajectories of
system \eqref{erro} and making use of \eqref{Ly1},\eqref{norm}, \eqref{dv1} and \eqref{dv2}, we have
\begin{equation}\begin{array}{lll}
\dot{V} (\eta_{t})
&\leq& -\frac{\theta}{2} \|\eta\|+
2\|\eta \|\| P\|\|\Delta_{\theta}(f(\hat{x},\hat{x}^{\tau},u)-f(x,x^{\tau}u))\|\\\\
& &-\frac{\sqrt{\theta}}{2}\|\eta^{\tau}\|^{2}
-\frac{\ln\theta}{2\tau}V_{2}(\eta_{t}).\end{array}\label{dotv}\end{equation}
Using  \eqref{norm} we obtain
\begin{eqnarray}
\dot{V} (\eta_{t})+\frac{\ln\theta}{ 2\tau}V(\eta_{t})
&\leq& -\left\{\frac{\theta}{2}-\|P\|\frac{\ln\theta}{2 \tau}\right\}\|\eta\|^{2}+
2\|\eta \|\| P\|\|\Delta_{\theta}(f(\hat{x},\hat{x}^{\tau},u)-f(x,x^{\tau}u))\|-\frac{\sqrt{\theta}}{2}\|\eta^{\tau}\|^{2}\nonumber
\label{v+v}\end{eqnarray}
The following inequality hold globally thanks to assumption $\textbf{A1}$ ( as in \cite{Farza, ghanes})
\begin{eqnarray}\|\Delta_{\theta}(f(\hat{x},\hat{x}^{\tau},u)-f(x,x^{\tau}u))\|
&\leq& k_{1}\|\Delta_{\theta}(\hat{x}-x)\|+k_{2} \|\Delta_{\theta}(\hat{x}^{\tau}-x)\|\label{lipsch1}\\ \nonumber\\
&\leq& k\|\eta\|+k \|\eta^{\tau}\|\label{lipsch2}.\end{eqnarray}
where $k_{1},\ k_{2}$ is a Lipschitz constant in \eqref{lipsch1} and $k=\max(k_{1},\ k_{2})$.\\
So, we get
\begin{eqnarray}
\dot{V} (\eta_{t})+\frac{\ln\theta}{ 2\tau}V(\eta_{t})
&\leq& -\left\{\frac{\theta}{2}-\|P\|\frac{\ln\theta}{2 \tau}+2k\|P\|\right\}\|\eta\|^{2}+
2k\| P\|\|\eta\|\|\eta^{\tau}\|^{2}-\frac{\sqrt{\theta}}{2}\|\eta^{\tau}\|^{2}\nonumber
\label{v+v1}\end{eqnarray}
 Using the fact that $$\begin{array}{lll}
2\|\eta\|\|\eta^{\tau}\|&\leq& \|\eta\|^{2}+\|\eta^{\tau}\|^{2},\end{array}$$
we deduce that \begin{eqnarray}
\dot{V} (\eta_{t})+\frac{\ln\theta}{ 2\tau}V(\eta_{t})
&\leq& -\left\{\frac{\theta}{2}-\|P\|\frac{\ln\theta}{2 \tau}-3k\|P\|\right\}\|\eta\|^{2}
-\left\{\frac{\sqrt{\theta}}{2}-k\|P\|\right\}\|\eta^{\tau}\|^{2}\label{v+v-mu}\end{eqnarray}
Let $$ \begin{array}{lll}a(\theta)&=& \frac{\theta}{2}-\|P\|\frac{\ln\theta}{2 \tau}-3k\|P\|\\\\
b(\theta)&=& \frac{\sqrt{\theta}}{2}-k\|P\|\end{array}$$
 Using \eqref{condi1} we have $a(\theta)> 0$ and $b(\theta)> 0$.

Now, the objective is to prove the rational convergence of the observer \eqref{erro}. Inequality \eqref{v+v-mu} becomes
$$\dot{V}(\eta_{t})\leq-\frac{\ln\theta}{2\tau} V(\eta_{t}).$$
Finally, using the  stability Theorem \ref{th2}, we can conclude that the error dynamics \eqref{erro}  is
rationally stable if  \eqref{condi1}  hold.
\end{proof}
\subsection{Global rational stabilization by state feedback}
In this subsection, we establish a delay-dependent condition for the rational state
feedback stabilization of the nonlinear system \eqref{3}. The state feedback controller is given
by \begin{equation} u=K(\theta)x \label{5}\end{equation}
where $K(\theta)= [k_{1}\theta^{n},\ldots,k_{n}\theta]$ and  $K= [k_{1},\ldots,k_{n}]$ is selected such that
$A_{K}:=A+BK$ is Hurwitz.
Let $S$ be the symmetric positive definite solution of the Lyapunov equation
\begin{equation} A^{T}_{K}S + SA_{K} = -I.\label{Ly2}\end{equation}
\begin{theorem}\label{th4} Suppose that Assumption 1-2 is satisfied
and  there exist  positive constant $\theta$ such that
\begin{equation}\left\{
                  \begin{array}{ll}
                    \frac{\theta}{2}-\|S\|\frac{\ln\theta}{2\tau}-3k\|S\|>0 & \hbox{,} \\\\
                    \frac{\sqrt{\theta}}{2}-k\|S\|\ \ \ \ \ \ \ \ \ \ \ \  >0 & \hbox{,}
                  \end{array}
                \right.\label{condi2}\end{equation}
then, the closed loop time-delay system \eqref{3}-\eqref{5} is globally rationally
stable.
\end{theorem}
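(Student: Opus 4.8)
The plan is to follow the proof of Theorem~\ref{th3} almost verbatim, with the observer gain $L(\theta)$ replaced by the feedback gain $K(\theta)$, the error equation replaced by the closed-loop equation, and $P$ replaced by $S$. First I would substitute $u=K(\theta)x$ into \eqref{3}, obtaining the autonomous closed-loop system $\dot{x}=(A+BK(\theta))x+f(x,x^{\tau},K(\theta)x)$, which is of the form \eqref{1}. The key algebraic point is that, with the same scaling matrix $\Delta_{\theta}=diag[1,\theta^{-1},\dots,\theta^{-(n-1)}]$ used in Theorem~\ref{th3}, one has $\Delta_{\theta}(A+BK(\theta))\Delta_{\theta}^{-1}=\theta A_{K}$: indeed $\Delta_{\theta}A\Delta_{\theta}^{-1}=\theta A$ as already noted, and the descending powers of $\theta$ in $K(\theta)=[k_{1}\theta^{n},\dots,k_{n}\theta]$ are arranged precisely so that $\Delta_{\theta}BK(\theta)\Delta_{\theta}^{-1}=\theta BK$ (the $(n,j)$ entry is $\theta^{-(n-1)}k_{j}\theta^{n+1-j}\theta^{j-1}=k_{j}\theta$). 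Setting $\zeta=\Delta_{\theta}x$ therefore gives $\dot{\zeta}=\theta A_{K}\zeta+\Delta_{\theta}f(x,x^{\tau},K(\theta)x)$, an autonomous system in $\zeta$.

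Next I would take the Lyapunov--Krasovskii functional $V(\zeta_{t})=V_{1}(\zeta_{t})+V_{2}(\zeta_{t})$ with $V_{1}(\zeta_{t})=\zeta^{T}S\zeta$ and $V_{2}$ defined as in Theorem~\ref{th3} but with $\zeta$ in place of $\eta$, that is, \eqref{veta} with $P$ replaced by $S$. From \eqref{norm} applied to $S$ one obtains $\lambda_{\min}(S)\|\zeta(t)\|^{2}\le V(\zeta_{t})\le(\lambda_{\max}(S)+\frac{\theta\tau}{2})\|\zeta_{t}\|_{\infty}^{2}$, so condition (i) of Theorem~\ref{th2} holds for the $\zeta$-system with $\lambda_{1}=\lambda_{\min}(S)$, $\lambda_{2}=\lambda_{\max}(S)+\frac{\theta\tau}{2}$ and $r_{1}=r_{2}=2$. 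Differentiating $V$ along the closed-loop trajectories, using the Lyapunov equation \eqref{Ly2} and the expression for $\dot{V}_{2}$ from \eqref{dv2}, then invoking the global Lipschitz bound of Assumption~1 together with the triangular structure of $f$ to obtain $\|\Delta_{\theta}f(x,x^{\tau},K(\theta)x)\|\le k\|\zeta\|+k\|\zeta^{\tau}\|$ as in \eqref{lipsch1}--\eqref{lipsch2} (it matters here that the Lipschitz constant is uniform in $u$, since the control $u=K(\theta)x$ itself depends on $\theta$), and finally using $2\|\zeta\|\|\zeta^{\tau}\|\le\|\zeta\|^{2}+\|\zeta^{\tau}\|^{2}$, I would arrive at
\[
\dot{V}(\zeta_{t})+\frac{\ln\theta}{2\tau}V(\zeta_{t})\le-a(\theta)\|\zeta\|^{2}-b(\theta)\|\zeta^{\tau}\|^{2},
\]
where $a(\theta)=\frac{\theta}{2}-\|S\|\frac{\ln\theta}{2\tau}-3k\|S\|$ and $b(\theta)=\frac{\sqrt{\theta}}{2}-k\|S\|$.

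By hypothesis \eqref{condi2} we have $a(\theta)>0$ and $b(\theta)>0$, hence $\dot{V}(\zeta_{t})\le-\frac{\ln\theta}{2\tau}V(\zeta_{t})$. Applying Theorem~\ref{th2} with $\alpha(t)=\frac{\ln\theta}{2\tau}t$, which is of class $\mathcal{K}$ for $\theta>1$ and satisfies condition (iii) of Theorem~\ref{th2} (with $l=+\infty$), yields global rational stability of the $\zeta$-dynamics; since $x=\Delta_{\theta}^{-1}\zeta$ is a fixed invertible linear change of coordinates, the decay estimate \eqref{rat} transfers, up to a constant factor, to the closed-loop system \eqref{3}--\eqref{5}, which is therefore globally rationally stable.

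I expect the only genuinely new content --- everything else being a transcription of the proof of Theorem~\ref{th3} --- to be the two checks just described: the scaling identity $\Delta_{\theta}BK(\theta)\Delta_{\theta}^{-1}=\theta BK$, and the high-gain Lipschitz estimate $\|\Delta_{\theta}f(x,x^{\tau},K(\theta)x)\|\le k(\|\zeta\|+\|\zeta^{\tau}\|)$, for which one must use that each $f_{i}$ depends only on $x_{1},\dots,x_{i}$ and their delayed values and that the Lipschitz bound is uniform in $u$, so that substituting the $\theta$-dependent feedback does not spoil the estimate. Once these are in place, the bookkeeping of constants and the final appeal to Theorem~\ref{th2} are identical to the observer case.
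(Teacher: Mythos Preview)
Your proof is correct and follows essentially the same approach as the paper's own proof: the paper also rescales via $\chi=\Delta_\theta x$, uses the Lyapunov--Krasovskii functional $W(\chi_t)=\chi^{T}S\chi+\frac{\theta}{2}\theta^{-t/(2\tau)}\int_{t-\tau}^{t}\theta^{s/(2\tau)}\|\chi(s)\|^{2}\,ds$, and arrives at the same differential inequality (with the constants named $c(\theta),d(\theta)$ rather than your $a(\theta),b(\theta)$) before invoking Theorem~\ref{th2}. Your explicit verification of $\Delta_\theta BK(\theta)\Delta_\theta^{-1}=\theta BK$ and your remark that the Lipschitz bound must be uniform in $u$ are welcome clarifications that the paper leaves implicit.
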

\begin{proof}
The closed loop system is given by
 \begin{equation}\dot{x}=(A+BK(\theta))x+f(x,x^{\tau},u).\label{dotx} \end{equation}
Let $\chi = \Delta_{\theta}x$. Using the fact that $\Delta_{\theta}BK(\theta) = \theta BK\Delta_{\theta}$ we get
\begin{equation}\dot{\chi}=\theta A_{K}\chi+\Delta_{\theta}f(x,x^{\tau},u).\label{dotchi} \end{equation}
Let us choose a Lyapunov-Krasovskii functional candidate as follows
 \begin{equation} W(\chi_{t})= W_{1}(\chi_{t})+ W_{2}(\chi_{t}),\label{xeta}\end{equation}
 with $$W_{1}(\chi_{t})=\chi^{T} S\chi$$ and
 $$ W_{2}(\chi_{t}) = \frac{\theta}{2}\theta^{\frac{-t}{2\tau}}\displaystyle\int_{t-\tau}^{t}\theta^{\frac{s}{2\tau}}\|\chi(s)\|^{2}ds.$$
 As in the proof of Theorem \ref{th3}, we have
 $$\lambda_{\min}(S)\|\chi(t)\|^{2}\leq W(\chi_{t} )\leq( \lambda_{\max}(S)+\frac{\theta\tau}{2})\| \chi_{t}\|_{\infty}^{2}$$
Thus condition $(i)$ of Theorem \ref{th2} is satisfied with $$\lambda_{1}=\lambda_{\min}(S),\
\lambda_{2}= \lambda_{\max}(S)+\frac{\theta\tau}{2},\ r_{1}=r_{2}=2.$$
The time derivative of \eqref{xeta} along the trajectories of
system \eqref{dotchi} is given by
$$\begin{array}{lll}
\dot{W} (\chi_{t})&=&2\chi^{T}S\dot{\chi}+\frac{\theta}{2}\|\chi\|^{2}-\frac{\sqrt{\theta}}{2}\|\chi^{\tau}\|^{2}-
\frac{\ln\theta}{2\tau}W_{2}(\chi_{t})\\\\
&=& 2\theta\chi^{T}SA_{K}\chi+2\chi^{T}S\Delta_{\theta}f(x,x^{\tau},u)+\frac{\theta}{2}\|\chi\|^{2}
-\frac{\sqrt{\theta}}{2}\|\chi^{\tau}\|^{2}-\frac{\ln\theta}{2\tau}W_{2}(\chi_{t})\\\\
&\leq& -\frac{\theta}{2}\|\chi\|+2\|\chi\|\|S\|\|\Delta_{\theta}f(x,x^{\tau},u)\|
-\frac{\sqrt{\theta}}{2}\|\chi^{\tau}\|^{2}-\frac{\ln\theta}{2\tau}W_{2}(\chi_{t})
.\end{array}$$
 Since $f(0,0,u)=0$, \eqref{lipsch2} implies that
 \begin{equation}\|\Delta_{\theta}f(x,x^{\tau},u)\|\leq k\|\chi\|+k\|\chi^{\tau}\|.\label{delta}\end{equation}
 So
\begin{eqnarray}
\dot{W} (\chi_{t})+\frac{\ln\theta}{2 \tau}W(\chi_{t})
&\leq& -\left\{\frac{\theta}{2} -\|S\|\frac{\ln\theta}{2 \tau}-2k\|S\|\right\}\|\chi\|^{2}+
2k\| S\|\|\chi \|\|\chi^{\tau}\|-\frac{\sqrt{\theta}}{2}\|\chi^{\tau}\|^{2}\label{w+w}\end{eqnarray}
Using the fact that $$\begin{array}{lll}
2\|\chi\|\|\chi^{\tau}\|&\leq& \|\chi\|^{2}+\|\chi^{\tau}\|^{2}.\end{array}$$
We deduce that \begin{eqnarray}
\dot{W} (\chi_{t})+\frac{\ln\theta}{ 2\tau}W(\chi_{t})
&\leq& -\left\{\frac{\theta}{2}-\|S\|\frac{\ln\theta}{2 \tau}-3k\|S\|\right\}\|\chi\|^{2}
-\left\{\frac{\sqrt{\theta}}{2}-k\|S\|\right\}\|\chi^{\tau}\|^{2}.\label{w+w-mu}\end{eqnarray}
Let $$ \begin{array}{lll}c(\theta)&=& \frac{\theta}{2}-\|S\|\frac{\ln\theta}{2 \tau}-3k\|S\|,\\\\
d(\theta)&=& \frac{\sqrt{\theta}}{2}-k\|S\|.\end{array}$$
 Using \eqref{condi2}, we have $c(\theta)> 0$ and $d(\theta)> 0$ which implies that

$$\dot{W}(\chi_{t})\leq-\frac{\ln\theta}{2\tau} W(\chi_{t}).$$
By Theorem \ref{th2}, we conclude that the origin of the closed loop system \eqref{dotx} is globally
rationally stable.
\end{proof}
\subsection{Observer-based control stabilization}
In this subsection, we implement the control law with estimate states. The observer-based
controller is given by:
\begin{equation}u = K(\theta)\hat{x}, \label{cont}\end{equation}
where $\hat{x}$ is provided by the observer \eqref{obs}.
\begin{theorem} Suppose that Assumptions 1-2 are satisfied,
such that conditions \eqref{condi1} and \eqref{condi2} hold. Then the origin of the closed loop time-delay
system \eqref{3}-\eqref{cont} is globally rationally stable.\end{theorem}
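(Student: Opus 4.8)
The plan is to regard the closed loop as a cascade of the observation‑error dynamics into the controlled‑state dynamics and to analyse it with a single Lyapunov–Krasovskii functional assembled from the two functionals $V$ and $W$ used in the proofs of Theorems \ref{th3} and \ref{th4}. First I would substitute $u=K(\theta)\hat{x}=K(\theta)(x+e)$, with $e=\hat{x}-x$, into \eqref{3} and \eqref{obs}, which gives
$$\dot{x}=(A+BK(\theta))x+BK(\theta)e+f(x,x^{\tau},u),\qquad \dot{e}=(A+L(\theta)C)e+f(\hat{x},\hat{x}^{\tau},u)-f(x,x^{\tau},u).$$
Applying the high‑gain rescalings $\chi=\Delta_{\theta}x$, $\eta=\Delta_{\theta}e$ and the identity $\Delta_{\theta}BK(\theta)=\theta BK\Delta_{\theta}$, one obtains
$$\dot{\chi}=\theta A_{K}\chi+\theta BK\eta+\Delta_{\theta}f(x,x^{\tau},u),\qquad \dot{\eta}=\theta A_{L}\eta+\Delta_{\theta}\big(f(\hat{x},\hat{x}^{\tau},u)-f(x,x^{\tau},u)\big).$$
Note that the $\eta$‑subsystem coincides with the one in Theorem \ref{th3} and, via \eqref{lipsch2}, its nonlinear part is bounded by $k\|\eta\|+k\|\eta^{\tau}\|$ regardless of the particular value of $u$; likewise the nonlinear part of the $\chi$‑equation still obeys \eqref{delta}. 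The only new feature is the coupling term $\theta BK\eta$.

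Next I would use the composite functional $U(\chi_{t},\eta_{t})=W(\chi_{t})+\rho\, V(\chi_{t},\eta_{t})$ with $\rho>0$ to be fixed, $W$ and $V$ exactly as in the proofs of Theorems \ref{th4} and \ref{th3}. The sandwich bounds obtained there yield at once $c_{1}\big(\|\chi(t)\|^{2}+\|\eta(t)\|^{2}\big)\le U(\chi_{t},\eta_{t})\le c_{2}\big(\|\chi_{t}\|_{\infty}^{2}+\|\eta_{t}\|_{\infty}^{2}\big)$, i.e. hypothesis (i) of Theorem \ref{th2} with $r_{1}=r_{2}=2$. Differentiating $U$ along the cascade and reusing verbatim the estimates of Theorems \ref{th3}–\ref{th4} for every term not involving the coupling — the $\rho V$ part reproduces $\rho$ times \eqref{v+v-mu}, and the $W$ part reproduces \eqref{w+w-mu} plus the single extra contribution $2\theta\chi^{T}SBK\eta$ coming from the new drift in $\dot{\chi}$ — I get
$$\dot{U}+\tfrac{\ln\theta}{2\tau}U\le -c(\theta)\|\chi\|^{2}-d(\theta)\|\chi^{\tau}\|^{2}-\rho a(\theta)\|\eta\|^{2}-\rho b(\theta)\|\eta^{\tau}\|^{2}+2\theta\,\chi^{T}SBK\eta,$$
with $a(\theta),b(\theta)>0$ from \eqref{condi1} and $c(\theta),d(\theta)>0$ from \eqref{condi2}.

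The main obstacle is the cross term $2\theta\chi^{T}SBK\eta$, which is of the same order in $\theta$ as every other term and cannot be discarded. I would bound it by Young's inequality in the form $2\theta\chi^{T}SBK\eta\le \varepsilon\theta\|\chi\|^{2}+\varepsilon^{-1}\theta\|S\|^{2}\|BK\|^{2}\|\eta\|^{2}$ and then make the $\theta$‑dependent choice $\varepsilon=c(\theta)/(2\theta)$, which is admissible precisely because \eqref{condi2} gives $c(\theta)>0$; this leaves the coefficient $c(\theta)/2>0$ in front of $\|\chi\|^{2}$. It then suffices to take $\rho>2\theta^{2}\|S\|^{2}\|BK\|^{2}/\big(a(\theta)c(\theta)\big)$, which is finite since \eqref{condi1} gives $a(\theta)>0$, to make the coefficient of $\|\eta\|^{2}$ negative as well, the $\|\chi^{\tau}\|^{2}$ and $\|\eta^{\tau}\|^{2}$ coefficients being already negative. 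Hence
$$\dot{U}(\chi_{t},\eta_{t})\le -\tfrac{\ln\theta}{2\tau}\,U(\chi_{t},\eta_{t}).$$

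Finally I would invoke Theorem \ref{th2} applied to the augmented state $(\chi,\eta)$ (equivalently $(x,e)$, since $\Delta_{\theta}$ is a fixed linear isomorphism): condition (i) has been checked, condition (ii) holds with $\alpha(s)=\tfrac{\ln\theta}{2\tau}s$, which is of class $\mathcal{K}$ for $\theta>1$, and condition (iii) holds with any $k>0$ since $\lim_{t\to0}\alpha(t)/t^{k+1}=+\infty$. Therefore the origin of the $(\chi,\eta)$‑system is globally rationally stable, and consequently so is the origin of the closed loop \eqref{3}–\eqref{cont} (the a priori bound on $U$ together with Assumption 1 also guarantees that solutions are defined for all $t\ge0$).
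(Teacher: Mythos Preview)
Your argument is correct and mirrors the paper's proof: the same composite functional $U=\rho V+W$ (the paper writes $\alpha$ for your $\rho$), the same Young inequality to absorb the cross term $2\theta\chi^{T}SBK\eta$ with the same choice of splitting parameter $\varepsilon=c(\theta)/(2\theta\|S\|\|K\|)$ (equivalently your $\varepsilon=c(\theta)/(2\theta)$), and the same lower bound on $\rho$ to make the $\|\eta\|^{2}$ coefficient negative. Aside from a typographical slip---$V(\chi_{t},\eta_{t})$ should read $V(\eta_{t})$---your sketch is essentially the paper's proof with a slightly more explicit final appeal to Theorem~\ref{th2}.
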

\begin{proof}
The closed loop system in the $(\chi, \eta)$ coordinates can be written as follows:
\begin{equation}\begin{array}{lll}\dot{\chi}&=&\theta A_{K}\chi+\theta BK\eta+\Delta_{\theta}f(x,x^{\tau},u),\\
\dot{\eta}&=&A_{L}\eta+\Delta_{\theta}(f(\hat{x},\hat{x}^{\tau},u)-f(x,x^{\tau},u)).\label{w+v}\end{array}\end{equation}
Let $$U(\eta_{t},\chi_{t})=\alpha V(\eta_{t})+ W(\chi_{t}),$$ where $V$ and $W$ are given by \eqref{veta} and \eqref{xeta} respectively.
Using the above results, we get
$$\begin{array}{lll}\dot{U}(\eta_{t},\chi_{t})+\frac{\ln\theta}{2\tau}U( \eta_{t},\chi_{t})&\leq& -\alpha a(\theta)
\|\eta\|^2 - c(\theta)\|\chi\|^{2}+2\theta
\|S\|\|K\|\|\eta\|\|\chi\|.\end{array}$$
Now using the fact that for all $\varepsilon > 0$,
$$2\|\chi\|\|\eta\|\leq\varepsilon\|\chi\|^{2}+\frac{1}{\varepsilon}\|\eta\|^{2},$$
and select $\varepsilon = \frac{c(\theta)}{2\theta\|S\|\|K\|}$, we get
$$\begin{array}{lll}\dot{U}(\eta_{t},\chi_{t})+\frac{\ln\theta}{2\tau}U( \eta_{t},\chi_{t})&\leq& -\alpha a(\theta)
\|\eta\|^2 - \frac{c(\theta)}{2}\|\chi\|^{2}+\frac{2\theta^{2}}{c(\theta)}\|S\|^{2}\|K\|^{2}\|\eta\|^{2}.\end{array}$$
Finally we select $\alpha$ such that $$\alpha a(\theta)-\frac{2\theta^{2}}{c(\theta)}\|S\|^{2}\|K\|^{2}>0,$$
to deduce that the origin of system \eqref{w+v} is globally rationally stable.
\end{proof}
\begin{remark}
It is easy to see that, $a(\theta),\,  b(\theta),\, c(\theta)$ and $d(\theta)$ tend to $\infty$ as $\theta$ tends to $\infty$.
 This implies that there exists $\theta_{0} >1$ such that for all $\theta>\theta_{0}$ conditions \eqref{condi1} and \eqref{condi2} are fulfilled.
\end{remark}
\section{Global rational stabilization by output feedback}
In this subsection, we propose the following system:
\begin{equation}\label{observa}
\dot{\tilde{\hat{x}}}(t) = A\tilde{\hat{x}} + Bu(t)+ L(\theta)( C\tilde{\hat{x}}-y)\end{equation}
The output feedback controller is given by \begin{equation}\label{k2}u=K(\theta)\tilde{\hat{x}}\end{equation}
Under Assumption 1-2, we give now required conditions to ensure that  the origin of system \eqref{3} is rendered globally rationally
stable by the dynamic output feedback control \eqref{observa}-\eqref{k2}.
\begin{theorem}
Consider the time-delay system \eqref{3} under Assumptions 1-2. Suppose
that there exists $\theta > 0$ such that condition \eqref{condi2} holds and
\begin{equation}\frac{\theta}{2}-\|S\|\frac{\ln\theta}{2 \tau}-3k\|S\|>0\label{cond3}\end{equation}
then the closed-loop time-delay system \eqref{3}-\eqref{k2} is globally rationally
stable.
\end{theorem}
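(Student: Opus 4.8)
The plan is to follow the proofs of Theorems \ref{th3} and \ref{th4}: rewrite the closed loop \eqref{3}--\eqref{k2} as a cascade in high-gain coordinates and dominate it by a weighted sum of the two Lyapunov--Krasovskii functionals \eqref{veta} and \eqref{xeta}. Put $e=\tilde{\hat{x}}-x$. Since the observer \eqref{observa} does not contain the nonlinearity, subtracting \eqref{3} from \eqref{observa} gives $\dot{e}=(A+L(\theta)C)e-f(x,x^{\tau},u)$, while substituting $u=K(\theta)\tilde{\hat{x}}=K(\theta)(x+e)$ into \eqref{3} gives $\dot{x}=(A+BK(\theta))x+BK(\theta)e+f(x,x^{\tau},u)$. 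With $\Delta_{\theta}=diag[1,\theta^{-1},\ldots,\theta^{-(n-1)}]$, $\chi=\Delta_{\theta}x$, $\eta=\Delta_{\theta}e$, and the identities $\Delta_{\theta}A\Delta_{\theta}^{-1}=\theta A$, $C\Delta_{\theta}^{-1}=C$, $\Delta_{\theta}L(\theta)=\theta L$, $\Delta_{\theta}BK(\theta)=\theta BK\Delta_{\theta}$ used already above, the closed loop becomes the cascade
$$\dot{\chi}=\theta A_{K}\chi+\theta BK\eta+\Delta_{\theta}f(x,x^{\tau},u),\qquad \dot{\eta}=\theta A_{L}\eta-\Delta_{\theta}f(x,x^{\tau},u),$$
and, since $f(0,0,u)=0$, Assumption 1 yields as in \eqref{lipsch1}--\eqref{delta} that $\|\Delta_{\theta}f(x,x^{\tau},u)\|\le k\|\chi\|+k\|\chi^{\tau}\|$.

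Next I would set $U(\eta_{t},\chi_{t})=\alpha V(\eta_{t})+W(\chi_{t})$ with $\alpha>0$ to be fixed, $V$ the functional \eqref{veta} built from the solution $P$ of \eqref{Ly1} and $W$ the functional \eqref{xeta} built from $S$. The bound $(i)$ of Theorem \ref{th2}, with $r_{1}=r_{2}=2$, is inherited from the sandwich bounds already established for $V$ and $W$. For the derivative, the computations from the proofs of Theorems \ref{th3} and \ref{th4} carry over almost verbatim; the only structural change is that the forcing term of $\dot{\eta}$ is now $-\Delta_{\theta}f(x,x^{\tau},u)$ rather than an increment of $f$, so that $\dot{V}+\frac{\ln\theta}{2\tau}V$ acquires cross terms in $\|\eta\|\|\chi\|$ and $\|\eta\|\|\chi^{\tau}\|$ with coefficient $2k\|P\|$, while $\dot{W}+\frac{\ln\theta}{2\tau}W$ keeps, besides the negative terms $-c(\theta)\|\chi\|^{2}-d(\theta)\|\chi^{\tau}\|^{2}$ isolated in Theorem \ref{th4} (with $c(\theta),d(\theta)>0$ by \eqref{condi2}), the coupling $2\theta\|S\|\|K\|\|\chi\|\|\eta\|$ coming from $\theta BK\eta$.

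The only delicate step is the Young-inequality bookkeeping and the resulting choice of $\alpha$. I would absorb $2k\|P\|\|\eta\|\|\chi\|$ and $2k\|P\|\|\eta\|\|\chi^{\tau}\|$ into a small fraction of the $-\frac{\theta}{2}\|\eta\|^{2}$ available in $\dot{V}$ plus multiples of $\|\chi\|^{2}$ and $\|\chi^{\tau}\|^{2}$ whose coefficients are of order $\theta^{-1}$ and hence absorbable into $-c(\theta)\|\chi\|^{2}$ and $-d(\theta)\|\chi^{\tau}\|^{2}$; and I would treat $2\theta\|S\|\|K\|\|\chi\|\|\eta\|$ as in the observer-based controller above, i.e.\ $2\|\chi\|\|\eta\|\le\varepsilon\|\chi\|^{2}+\varepsilon^{-1}\|\eta\|^{2}$ with $\varepsilon$ a fixed fraction of $c(\theta)$. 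After this, the coefficient of $\|\eta\|^{2}$ in $\dot{U}+\frac{\ln\theta}{2\tau}U$ has the schematic form $-\alpha A(\theta)+\alpha^{2}B(\theta)+C(\theta)$ with $A(\theta)\sim\theta/2$, so it is negative exactly for $\alpha$ ranging over an interval whose lower end stays bounded while its upper end grows to $+\infty$ with $\theta$; thus, for $\theta$ large enough — which is compatible with \eqref{condi2} and \eqref{cond3} since, as noted in the Remark above, the relevant quantities tend to $\infty$ with $\theta$ — one can fix $\alpha$ so that the coefficients of $\|\eta\|^{2}$, $\|\eta^{\tau}\|^{2}$, $\|\chi\|^{2}$ and $\|\chi^{\tau}\|^{2}$ in $\dot{U}+\frac{\ln\theta}{2\tau}U$ are all negative. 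This is where the output-feedback case is genuinely harder than the observer-based one: there the error is driven by $\hat{x}-x$, whereas here it is driven by $f(x,x^{\tau},u)$, which is controlled only by $\chi$ and $\chi^{\tau}$, producing the extra coupling one must dominate.

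With such an $\alpha$ fixed, the surviving inequality is $\dot{U}(\eta_{t},\chi_{t})\le-\frac{\ln\theta}{2\tau}U(\eta_{t},\chi_{t})$. Conditions $(i)$--$(iii)$ of Theorem \ref{th2} then hold for $U$ — with the class-$\mathcal{K}$ function $s\mapsto\frac{\ln\theta}{2\tau}s$, exponents $r_{1}=r_{2}=2$, and any $k>0$ yielding $l=+\infty$ in $(iii)$ — exactly as at the end of the proofs of Theorems \ref{th3} and \ref{th4}. Hence the origin of the above cascade, and therefore the origin of the closed loop \eqref{3}--\eqref{k2} in the original coordinates, is globally rationally stable.
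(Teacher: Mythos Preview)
Your overall strategy coincides with the paper's: pass to the high-gain coordinates $(\chi,\eta)$, take $U=\alpha V+W$ with the same functionals as in Theorems \ref{th3}--\ref{th4}, and show $\dot{U}+\frac{\ln\theta}{2\tau}U\le0$ so that Theorem \ref{th2} applies. The two proofs differ in the Young--inequality bookkeeping. The paper splits the $f$--induced cross terms symmetrically, $2\|\widetilde{\eta}\|\|\chi\|\le\|\widetilde{\eta}\|^{2}+\|\chi\|^{2}$ and $2\|\widetilde{\eta}\|\|\chi^{\tau}\|\le\|\widetilde{\eta}\|^{2}+\|\chi^{\tau}\|^{2}$, which pushes only $\alpha k\|P\|$ onto each of $\|\chi\|^{2}$ and $\|\chi^{\tau}\|^{2}$; it then simply takes $\alpha$ \emph{small}, namely $\alpha<\min\{c(\theta),d(\theta)\}/(k\|P\|)$, so that the $\chi$--coefficients stay negative while the $\widetilde{\eta}$--coefficient is governed by \eqref{cond3}. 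Your route instead uses weighted Young inequalities and an $\varepsilon$--split on the $\theta BK\eta$ term; this buys you the extra $2\theta\|S\|\|K\|\|\chi\|\|\eta\|$ coupling (which the paper's written proof does not display when it cites \eqref{w+w}), at the price of a more delicate choice of $\alpha$ and of $\theta$ ``large enough'' rather than just satisfying \eqref{condi2}--\eqref{cond3}.

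Two small caveats on your write-up. First, the ``schematic form $-\alpha A(\theta)+\alpha^{2}B(\theta)+C(\theta)$'' for the $\|\eta\|^{2}$--coefficient does not arise from the splits you describe: since $U$ is linear in $\alpha$ and your $\varepsilon$'s are $\alpha$--independent, the coefficient is affine in $\alpha$, of the form $-\alpha A(\theta)+C(\theta)$, and negativity just needs $\alpha>C(\theta)/A(\theta)$ while the $\chi$--coefficients need $\alpha$ below a threshold; you should check these two constraints are compatible (they are, for $\theta$ large). Second, your appeal to ``$\theta$ large enough'' is stronger than the hypotheses as stated; the paper's small--$\alpha$ argument works under \eqref{condi2}--\eqref{cond3} alone, without further enlarging $\theta$.
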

\begin{proof} Defining $\widetilde{e}=x-\tilde{\hat{x}}$ the observation error. We have
 \begin{equation}\dot{\widetilde{e}}=(A+L(\theta)C)\widetilde{e}+f(x,x^{\tau},u)\label{errotil}\end{equation}
 For $\theta>0$, let $\Delta_{\theta}=diag[1,\frac{1}{\theta},\ldots,\frac{1}{\theta^{n-1}}]$.
 Let  $\widetilde{\eta}=\Delta_{\theta}\widetilde{e}$, then we get
 \begin{equation}\dot{\widetilde{\eta}}=\theta A_{L}\tilde{\eta}+\Delta_{\theta}f(x,x^{\tau},u)\label{erro11}\end{equation}
 Let us choose a Lyapunov-Krasovskii functional candidate as follows
 \begin{equation} V(\widetilde{\eta}_{t})=\widetilde{\eta}^{T} P\widetilde{\eta}+
 \frac{\theta}{2}\theta^{\frac{-t}{2\tau}}\displaystyle\int_{t-\tau}^{t}\theta^{\frac{s}{2\tau}}\|\widetilde{\eta}(s)\|^{2}ds\label{veta1}\end{equation}
  As in the proof of Theorem \ref{th3}, we have
thus condition $(i)$ of Theorem \ref{th2} is satisfied with $$\lambda_{1}=\lambda_{\min}(P),\
\lambda_{2}= \lambda_{\max}(P)+\frac{\theta\tau}{2},\ r_{1}=r_{2}=2.$$
Following the proof of Theorem \ref{th2}, inequality \eqref{dotv} becomes
  $$\begin{array}{lll}
\dot{V} (\widetilde{\eta}_{t})
&\leq& -\frac{\theta}{2} \|\widetilde{\eta}\|+
2\|\widetilde{\eta} \|\| P\|\|\Delta_{\theta}f(x,x^{\tau},u)\|\\\\
& &-\frac{\sqrt{\theta}}{2}\|\widetilde{\eta}^{\tau}\|^{2}
-\frac{\theta}{2}\frac{\ln\theta}{2\tau}\theta^{\frac{-t}{2\tau}}\displaystyle\int_{t-\tau}^{t}\theta^{\frac{s}{2\tau}}\|\widetilde{\eta}(s)\|^{2}ds.
\end{array}$$
Using  \eqref{norm} and \eqref{delta} we obtain
\begin{eqnarray}
\dot{V} (\widetilde{\eta}_{t})+\frac{\ln\theta}{ 2\tau}V(\widetilde{\eta}_{t})
&\leq& -\left\{\frac{\theta}{2}-\|P\|\frac{\ln\theta}{2 \tau}\right\}\|\widetilde{\eta}\|^{2}+
2k\| P\|\|\widetilde{\eta} \|\|\chi\|+2k\| P\|\|\widetilde{\eta} \|\|\chi^{\tau}\|-\frac{\sqrt{\theta}}{2}\|\widetilde{\eta}^{\tau}\|^{2}\nonumber
\label{v+v}\end{eqnarray}
 Using the fact that $$\begin{array}{lll}
2\|\widetilde{\eta}\|\|\chi\|&\leq& \|\widetilde{\eta}\|^{2}+\|\chi\|^{2}\end{array}$$and
$$\begin{array}{lll}2\|\widetilde{\eta}\|\|\chi^{\tau}\|& \leq& \|\widetilde{\eta}\|^{2}+\|\chi^{\tau}\|^{2},\end{array}$$
we deduce that
\begin{eqnarray}
\dot{V} (\widetilde{\eta}_{t})+\frac{\ln\theta}{ 2\tau}V(\widetilde{\eta}_{t})
&\leq& -\left\{\frac{\theta}{2}-\|P\|\frac{\ln\theta}{2 \tau}-2k\|P\|\right\}\|\widetilde{\eta}\|^{2}+k\|P\|\|\chi\|^{2}
+k\|P\|\|\chi^{\tau}\|^{2}-\frac{\sqrt{\theta}}{2}\|\widetilde{\eta}^{\tau}\|^{2}\label{v+v-mu1}\end{eqnarray}
Let $$U(\widetilde{\eta}_{t},\chi_{t})=\alpha V(\widetilde{\eta}_{t})+ W(\chi_{t}),$$
where  $W$ is given by  \eqref{xeta}. Using \eqref{w+w} and \eqref{v+v-mu1}, we get
 $$\begin{array}{lll}\dot{U}(\widetilde{\eta}_{t},\chi_{t})+\frac{\ln\theta}{2\tau}U( \widetilde{\eta}_{t},\chi_{t})&\leq&
  -\alpha \left\{\frac{\theta}{2}-\|P\|\frac{\ln\theta}{2\tau}-2k\|P\|\right\}\|\tilde{\eta}\|^2 \\\\
& &-  \left\{c(\theta)-\alpha k\|P\|\right\}\|\chi\|^{2}-\left\{d(\theta)-\alpha k\|P\|\right\}\|\chi^{\tau}\|^{2}.\end{array}$$
Finally, we select $\alpha$ such that $$\alpha<\min\left(\frac{c(\theta)}{\alpha k\|P\|},\frac{d(\theta)}{\alpha k\|P\|}\right)$$
to get $$\dot{U}(\widetilde{\eta}_{t},\chi_{t})+\frac{\ln\theta}{2\tau}U( \widetilde{\eta}_{t},\chi_{t})\leq 0$$
Therefore, the closed-loop system is globally rationally stable.
\end{proof}
\begin{remark}
The given controller in Theorem \ref{th3} depends on the nonlinearity $f$ and the time delay
$\tau$, but the controller \eqref{observa}-\eqref{k2} is independent of $f$.
\end{remark}
\begin{remark}
It is worth mentioning that exponential stability including both passivity and dissipativity of generalized
neural networks with mixed time-varying delays are developed in   \cite{Ramasamy2017} by
using the Lyapunov Krasovskii approach in combination with linear matrix inequalities.
These conditions rely on the bounds of the activation functions.
In this paper, we utilize parameter-dependent control laws. We assume that there exists a linear feedback that asserts global rational
 stability of the linear part.
Hence, we select  the $\theta$- parameter in order to establish global rational stability of the nonlinear system under the same controller.\end{remark}
\begin{remark}
\cite{moon,li} show the sufficient conditions which guarantee that the calculation error converges asymptotically
 towards zero in terms of a linear matrix inequality.
As compared to \cite{moon,li}, our results are less conservative and more convenient to use
and  it seems natural and attractive to improve feedbacks and to get solutions decreasing to zero faster.\end{remark}
\begin{remark}
It is worth noting that the obtained findings can be used in multiple time-delays nonlinear systems in the upper-triangular form.
\end{remark}
\section{Simulation results}
This section presents experimental results, in the case of constant delay as an example of practical
application of the time-delay method in actual network-based control systems.
The dynamics of the network-based system are represented by:
\begin{equation}\begin{array}{lll}
\dot{x}_{1}&=&x_{2}(t)+x_{1}\cos x_{1}+x_{1}(t-\tau)\cos u,\\
\dot{x}_{2}&=&u,
\end{array}\label{*}\end{equation}
where $x(t)$ is the augmented state vector containing the plant state vector
and $\tau$ indicates the sensor-to-controller delay in the continuous-time case, is supposed to be constant.
The difference between $\hat{x} ( t )$ and $x ( t )$ is formulated as an error of the network-based system.
The system nonlinearities are globally Lipschitz.
Following the notation used throughout the paper, let
$f_{1}(x,x^{\tau},u)=x_{1}\cos x_{1}+x_{1}(t-\tau)\cos u$,
$f_{2}(x,x^{\tau},u)=0$.
Now, select $L=\left[
            \begin{array}{ccc}
              -14& -28 \\
            \end{array}
          \right]^{T}$ and $K=\left[
            \begin{array}{ccc}
              -30& -30 \\
            \end{array}
          \right]$, $A_{L}$ and $A_{K}$ are Hurwitz.
          Using Matlab, the solutions of the Lyapunov
equations \eqref{Ly1} and \eqref{Ly2} are given respectively by
$$P=\left[
  \begin{array}{ccc}
     0.0377  &  0.0278\\
    0.0278  &  1.0675\\
  \end{array}\right],\ \  S=\left[
  \begin{array}{ccc}
     0.5172 &   -0.5000\\
    -0.5000  &  0.5167\\
  \end{array}\right].$$
 So, $\|P\|=1.0682$ and $\|S\|=1.0169$.
 For our numerical simulation, we choose constant  delay $\tau = 1,$ and the initial conditions for the system are $x(0) = \left[
            \begin{array}{ccc}
              -20& -10 \\
            \end{array}
          \right]^{T} $,
for the observer $\hat{x}(0) = \left[
      \begin{array}{ccc}
     10& 10 \\
\end{array}
\right]^{T}$ and $\theta=8.$
 Corresponding numerical simulation results  are shown in Figures \ref{fig1}-\ref{fig2}.

\section{Conclusion}
In this paper, rational stability and stabilization are investigated for nonlinear time-delay systems.
 Based on this study, we reached a novel result in global rational stability and  stabilization of a class nonlinear time-delay systems.
This class of systems deals with the systems that have a triangular structure.
Based on the result, it was found that the Lyapunov approach was used to perform sufficient conditions for rational stability.
The novel design plays a crucial role in getting a rational stability condition and rendering our approach application to a general
class of systems, namely the class of nonlinear time-delay systems in a lower triangular form.
The numerical result of an example is provided to show the
effectiveness of the proposed approach. As a perspective, It is well known that delay-dependent conditions reveal
less conservative than delay-dependent ones, it can be developed in future research by
 considering other Lyapunov-Krasovskii functional to derive delay dependent conditions.
\newpage

\end{document}